\documentclass[10pt]{amsart}
\usepackage{amsmath}
\usepackage{amsthm}
\usepackage{amscd}
\usepackage{amssymb}
\usepackage{amsfonts}
\usepackage{graphics}
\usepackage[pdftex,colorlinks]{hyperref}
\date{}
\oddsidemargin 4pt
\evensidemargin 4pt
\textwidth16cm
\textheight21.5cm
\flushbottom

\newlength{\defbaselineskip}
\setlength{\defbaselineskip}{\baselineskip}
\newcommand{\setlinespacing}[1]
           {\setlength{\baselineskip}{#1 \defbaselineskip}}

\long\def\salta#1{\relax}

\theoremstyle{plain}
\newtheorem{theorem}{Theorem}[section]
\newtheorem{proposition}[theorem]{Proposition}
\newtheorem{lemma}[theorem]{Lemma}

\theoremstyle{definition}
\newtheorem{definition}[theorem]{Definition}

\newtheorem{remark}[theorem]{Remark}

\theoremstyle{remark}

\newcommand{\elle}[1]{L^{#1}(\Omega)}

\newcommand{\re}{\mathbb{R}}

\def\uo{u_{0}}
\def\utn{u^{n}_{\vare}}
\def\uu{\underline{u}}

\def\luo{L^{1}(\Omega)}
\def\lio{L^{\infty}(\Omega)}

\def\rn{\mathbb{R}^{N}}

\def\rife#1{(\ref{#1})}

\def\parl#1#2{L^{#1}(0,T;L^{#2}(\Omega))}

\def\luq{L^{1}(Q)}
\def\liq{L^{\infty}(Q)}
\def\ou{\overline{u}}
\def\vare{\varepsilon}

\def\t1p0{T^{1,p}_{0}(\Omega)}

\def\m2{M^{\frac{N(p-1)}{N-1}}(\Omega)}

\def\div{\mathrm{div}}

\def\tkv{T_{k}(v)}

\def\into{\int_{\Omega}}
\def\intq{\int_{Q}}

\def\w-1p'{W^{-1,p'}(\Omega)}
\def\pw-1p'u{L^{p'}(0,1;W^{-1,p'}(\Omega))}

\def\dys{\displaystyle}

\def\luo{L^{1}(\Omega)}
\def\lp'n{(L^{p'}(\Omega))^{N}}

\def\lio{L^{\infty}(\Omega)}
\def\liq{L^{\infty}(Q)}

\keywords{Asymptotic behavior, linear parabolic equations, measure data}
\subjclass[2000]{35B40, 35K55}

\author[F. Petitta]{Francesco Petitta}
\email{francesco.petitta@sbai.uniroma1.it}

\address[F. Petitta]{Dipartimento di Scienze di Base e Applicate
per l' Ingegneria, ``Sapienza", Universit\`a di Roma, Via Scarpa 16, 00161 Roma, Italy.}
\begin{document}

\setlinespacing{1}
\begin{abstract}
In this paper we deal with the asymptotic behavior as $t$ tends to infinity of solutions for linear parabolic equations  whose model is
$$
\begin{cases}
    u_{t}-\Delta u = \mu & \text{in}\ (0,T)\times\Omega,\\[0.7 ex]
    u(0,x)=u_0 & \text{in}\ \Omega,
  \end{cases}
$$
where  $\mu$ is a general, possibly singular, Radon measure which does not depend on time, and $u_0\in L^{1}(\Omega)$. We prove that the duality solution, which exists and is unique, converges to the duality solution (as introduced in \cite{s}) of the associated elliptic problem.
\end{abstract}
\title[Linear parabolic equations, Asymptotic behavior, measure data]{Asymptotic behavior of solutions for linear parabolic equations with general measure data} \maketitle
\setcounter{equation}{0}
\section{Introduction}\label{sec1}

A large number of papers has been devoted to the study of asymptotic behavior for solutions  of parabolic problems under various assumptions and in different contexts: for a review on classical  results see \cite{f}, \cite{a}, \cite{sp}, and references therein. More recently in \cite{pe} and \cite{lp} the case of  nonlinear monotone operators, and  quasilinear problems with nonlinear absorbing terms having natural growth, have been considered;  in particular, in \cite{pe}, we dealt with nonnegative measures $\mu$ absolutely continuous with respect to the parabolic $p$-capacity (the so called \emph{soft measures}). Here we analyze the case of linear operators with possibly singular general measures and no sign assumptions on the data.

Let $\Omega\subseteq \rn$ be a bounded open set, $N\geq 2$, $T>0$; we denote by $Q$  the cylinder $(0,T)\times\Omega$. We are interested in the  study of main properties and in the asymptotic behavior with respect
to the time variable $t$ of the solution of the linear parabolic
problem 
\begin{equation}\label{plin3}
\begin{cases}
u_t +L (u)=\mu& \text{in}\ (0,T)\times\Omega,\\
u(0)=u_0,& \text{in}\ \Omega,\\
u=0& \text{on}\ (0,T)\times\partial\Omega,
\end{cases}
\end{equation}
with $\mu\in \mathcal{M}(Q)$ the space of Radon measures with bounded total variation on $Q$, $u_0\in \luo$, and $$L(u)=-\div(M(x)\nabla u),$$
 where $M$  is a matrix with bounded, measurable entries, and satisfying the ellipticity assumption
\begin{equation}
\label{coercp}
M(x)\xi\cdot\xi\geq \alpha |\xi|^2,
\end{equation}
for any $\xi\in\rn$, with $\alpha >0$.

In order to obtain uniqueness, in the elliptic case, the notion of duality solution of Dirichlet problem 
 \begin{equation}\label{elin3}
\begin{cases}
  -\div{(M(x)\nabla v)} =\mu & \text{in}\ \Omega,\\[1.5 ex]
 v=0 &\text{on}\ \partial\Omega,
  \end{cases}
\end{equation}
was introduced in \cite{s}.

Following the idea of \cite{s} we can define a solution of problem \rife{plin3} in a duality sense as follows
\begin{definition}\label{dualdef}
A function $u\in\luq$ is a \emph{duality solution} of problem \rife{plin3} if 
\begin{equation}\label{dualsense}
-\into \uo w(0)\ dx+\intq u\, g\ dxdt=\intq w\ d\mu,
\end{equation}
for every $g\in\liq$, where $w$ is the solution of the \emph{retrograde problem}  
\begin{equation}\label{retroc1}
\begin{cases}
    -w_{t}- \mathrm{div} (M^{\ast}(t,x)\nabla w)
 =g & \text{in}\ (0,T)\times\Omega,\\
    w(T,x)=0 & \text{in}\ \Omega,\\
 w(t,x)=0 &\text{on}\ (0,T)\times\partial\Omega,
  \end{cases}
\end{equation}
where $M^{\ast}(t,x)$ is the transposed matrix of $M(t,x)$.
\end{definition}
\begin{remark}
Notice that all terms in \rife{dualsense} are well defined thanks to standard parabolic regularity results (see \cite{lsu}, \cite{e}). Moreover, it is quite easy to  check that any  duality solution of problem \rife{plin3} actually turns out to be a distributional solution of the same problem. Finally recall that any duality solution turns out to coincide with the renormalized solution of the same problem (see \cite{pe1}); this notion introduced in \cite{dmop} for the elliptic case, and then adapted to the parabolic case in \cite{pe1} should be the right one to ensure uniqueness  also in the nonlinear framework.
\end{remark}

A unique duality solution for problem \rife{plin3} exists, in fact we have the following 
\begin{theorem}\label{exiuni}
Let $\mu\in \mathcal{M}(Q)$ and $u_0\in\luo$, then there exists a unique duality solution of problem \rife{plin3}.
\end{theorem}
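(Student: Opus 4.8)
The plan is to follow closely Stampacchia's elliptic strategy, exploiting the fact that the retrograde problem \rife{retroc1} is well-posed for every $g\in\liq$ and that its solution $w$ enjoys enough regularity to make all the terms in \rife{dualsense} meaningful. I would split the argument into uniqueness first (which is immediate and motivates the definition) and existence afterwards (which requires an approximation).

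\emph{Uniqueness.} Suppose $u_1,u_2\in\luq$ are two duality solutions. Subtracting the two identities \rife{dualsense} (same $\uo$, same $\mu$) gives $\intq (u_1-u_2)\,g\,dxdt=0$ for every $g\in\liq$. Since $\liq$ is dense enough in the dual of $\luq$ — more precisely, taking $g=\mathrm{sign}(u_1-u_2)$, which belongs to $\liq$ — we conclude $u_1=u_2$ a.e.\ in $Q$. This is the whole point of testing against \emph{all} $g\in\liq$: it forces the solution operator $\mu\mapsto u$ to be the adjoint of the (well-defined) solution operator $g\mapsto w$ of the retrograde problem.

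\emph{Existence.} I would regularize the data: take $\mu_n\in C^\infty_0(Q)$ with $\mu_n\to\mu$ $*$-weakly in $\mathcal M(Q)$ and $\|\mu_n\|_{\mathcal M(Q)}\leq\|\mu\|_{\mathcal M(Q)}$ (mollification), and $u_{0,n}\in C^\infty_0(\Omega)$ with $u_{0,n}\to u_0$ in $\luo$. For these smooth data problem \rife{plin3} has a unique weak (energy) solution $u_n$, which is in particular a duality solution, so
\[
-\into u_{0,n}\,w(0)\,dx+\intq u_n\,g\,dxdt=\intq w\,\mu_n\,dxdt
\]
holds for every $g\in\liq$, with $w$ the corresponding solution of \rife{retroc1}. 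The key a priori estimate is the $L^1$-type bound: since $w$ solves a linear parabolic problem with right-hand side $g\in L^\infty$ and zero final datum, standard duality/regularity estimates (Aronson–Serrin / Stampacchia, see \cite{lsu}) give $\|w\|_{L^\infty(Q)}\leq C\,T\,\|g\|_{L^\infty(Q)}$ and $\|w(0)\|_{L^\infty(\Omega)}\leq C\,\|g\|_{L^\infty(Q)}$, with $C=C(N,\alpha,\Omega)$ independent of $n$. Choosing $g=\mathrm{sign}(u_n)$ in the identity then yields
\[
\|u_n\|_{L^1(Q)}\leq \|u_{0,n}\|_{L^1(\Omega)}\,\|w(0)\|_{L^\infty}+\|w\|_{L^\infty}\,\|\mu_n\|_{\mathcal M(Q)}\leq C\big(\|u_0\|_{L^1(\Omega)}+\|\mu\|_{\mathcal M(Q)}\big),
\]
uniformly in $n$. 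To upgrade weak-$*$ compactness in the space of measures to an actual $\luq$ limit, I would show $\{u_n\}$ is equi-integrable: testing with $g$ supported on a small set $E$ and using $\|w\|_{L^\infty}\leq C|E|\text{-type}$ refinements (or, more robustly, the classical a priori estimates for parabolic equations with measure data à la Boccardo–Gallouët giving $u_n$ bounded in $L^q(0,T;W^{1,q}_0(\Omega))$ for $q<\frac{N+2}{N+1}$, hence relatively compact in $L^1(Q)$). Passing to the limit in the linear identity — the left side is linear in $u_n$ and $u_{0,n}$, the right side linear in $\mu_n$, and $w$ is fixed once $g$ is fixed — gives a function $u\in\luq$ satisfying \rife{dualsense} for every $g\in\liq$.

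\emph{Main obstacle.} The delicate point is not the passage to the limit in the identity (that is genuinely linear in the data) but securing strong $L^1(Q)$-compactness of the approximants $u_n$: weak-$*$ convergence in $\mathcal M(Q)$ is not enough to identify the limit as an $L^1$ function, so one must produce a uniform equi-integrability estimate. I expect to obtain this either by the duality estimate on $w$ against characteristic functions of small sets, or by invoking the standard gradient estimates for parabolic problems with $L^1$/measure data; both are available in the literature cited (\cite{lsu}, and the renormalized-solutions framework of \cite{pe1}). Once equi-integrability is in hand, Vitali's theorem upgrades the weak limit to a strong $L^1(Q)$ limit and the proof is complete.
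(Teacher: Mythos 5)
Your proposal is correct and follows the same overall architecture as the paper: approximate the data by smooth ones, obtain a uniform $L^1$ bound by duality, invoke the compactness results of Boccardo--Dall'Aglio--Gallou\"et--Orsina to get strong $L^1(Q)$ convergence of the approximants, and pass to the limit in the (linear) duality identity. The one genuine difference is in the smooth-data step: the paper does not take the energy solution and prove uniqueness separately, but instead applies Stampacchia's device directly, showing that $g\mapsto \intq w\,d\mu+\into u_0w(0)$ is a continuous linear functional on $\parl{r}{q}$ for $r,q>1$ with $N/q+2/r<2$ (using the bound $\|w\|_{\liq}\le C\|g\|_{\parl{r}{q}}$), so that Riesz representation yields at once existence, uniqueness and the extra information $u\in \parl{r'}{q'}$. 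Your separate uniqueness argument via $g=\mathrm{sign}(u_1-u_2)$ is cleaner in that it works directly at the level of general data, but it buys less regularity. Two technical points in your existence step deserve sharpening. First, weak-$*$ convergence $\mu_n\rightharpoonup\mu$ in $\mathcal M(Q)$ is not quite enough to pass to the limit in $\intq w\,d\mu_n$: the solution $w$ of the retrograde problem vanishes on the lateral boundary and at $t=T$ but not at $t=0$, so mass escaping towards $\{0\}\times\Omega$ would be lost; the paper requires convergence in the narrow topology (testing against all of $C(\overline Q)$) precisely for this reason, and your mollification should be set up so as to guarantee it. Second, boundedness of $u_n$ in $L^q(0,T;W^{1,q}_0(\Omega))$ for $q<(N+2)/(N+1)$ does not by itself give relative compactness in $\luq$; one also needs the bound on $(u_n)_t$ in $L^1(Q)+L^{q'}(0,T;W^{-1,q'}(\Omega))$ and an Aubin--Lions--Simon argument, which is exactly what the cited result of \cite{bdgo} packages. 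With those two adjustments your proof is complete.
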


The main result of this paper concerns the asymptotic behavior of the duality solution of problem \rife{plin3}, in the case where  the measure $\mu$  do not depend on time. 

 First observe that by  Theorem \ref{exiuni} a unique solution 
 is well defined for all $t>0$. We are interested in the asymptotic behavior of $u(t,x)$ as $t$ tends to infinity. We recall that by a duality solution of problem \rife{elin3} we mean a function $v\in\luo$ such that  
 \begin{equation}\label{elldualsense}
\into v\, g\ dxdt=\into z\ d\mu,
\end{equation}
for every $g\in\lio$, where $z$ is the variational solution of the dual  problem  
\begin{equation}\label{dualell}
\begin{cases}
  -\mathrm{div} (M^{\ast}(x)\nabla z)
 =g & \text{in}\ \ \Omega,\\
   z(x)=0 &\text{on}\ \ \partial\Omega.
  \end{cases}
\end{equation}

As we will see later, a duality solution of problem \rife{plin3} turns out to be continuous with values in $\luo$. Let us state our main result:
\begin{theorem}\label{asi}
Let $\mu\in \mathcal{M}(Q)$  be independent on the variable $t$. Let $u(t,x)$ be the duality solution of problem
\rife{plin3} with $u_0 \in \luo$, and let $v(x)$ be the duality solution of the corresponding
elliptic problem \rife{elin3}. Then
\[
\lim_{T\rightarrow +\infty} u(T,x)=v(x),   
\]
in $L^{1}(\Omega)$.  
\end{theorem}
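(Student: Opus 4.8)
The plan is to exploit the duality formulation directly, pairing the parabolic solution $u$ against the retrograde problem and the elliptic solution $v$ against the dual elliptic problem, and then letting $T\to+\infty$. Fix $g\in\lio$ and let $z$ be the variational solution of \rife{dualell}. The natural test object on the parabolic side is the solution $w$ of the retrograde problem \rife{retroc1} with this same (time-independent) $g$ as right-hand side; by the change of variable $s=T-t$ this $w$ is nothing but the solution of a forward problem $\zeta_s+L^{\ast}\zeta=g$, $\zeta(0)=0$, evaluated at $s=T-t$, so $w(0,x)=\zeta(T,x)$. Thus the duality identity \rife{dualsense}, written at time $T$, reads
\begin{equation}\label{pf:one}
-\intO \uo\,\zeta(T,x)\,dx+\intqT u(t,x)\,g(x)\,dxdt=\intqT w(t,x)\,d\mu.
\end{equation}
Since $\mu$ does not depend on $t$, the right-hand side equals $\int_0^T\!\!\left(\intO w(t,x)\,d\mu\right)dt=\int_0^T\!\!\left(\intO \zeta(T-t,x)\,d\mu\right)dt=\int_0^T\!\!\left(\intO \zeta(s,x)\,d\mu\right)ds$. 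The key analytic input, which I would isolate as a lemma, is that $\zeta(s,\cdot)\to z$ in $\luo$ as $s\to+\infty$ and moreover $\intO\zeta(s,x)\,d\mu\to\intO z\,d\mu$ (this is precisely the asymptotics of the homogeneous-initial-datum problem for the dual operator, which is the clean well-posed case — $\zeta(s)$ is a bounded, increasing-in-regularity family since $g$ is bounded and time-independent, and it converges to the stationary state $z$).

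Given that lemma, a Cesàro/averaging argument closes the estimate: $\frac1T\int_0^T\big(\intO\zeta(s,x)\,d\mu\big)ds\to\intO z\,d\mu=\intO v\,g\,dx$, using \rife{elldualsense}. Likewise, since $\zeta(T,\cdot)$ is bounded in $\lio$ uniformly in $T$ (again because $g\in\lio$), and $\uo\in\luo$, the first term in \rife{pf:one} satisfies $\big|\intO\uo\,\zeta(T,x)\,dx\big|\le C\|\uo\|_{\luo}$, so after dividing \rife{pf:one} by $T$ it is $O(1/T)$ and disappears. The conclusion I would extract from this is not directly $u(T)\to v$ but rather $\frac1T\int_0^T u(t,\cdot)\,dt\to v$ tested against every $g\in\lio$. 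To upgrade the Cesàro convergence to genuine convergence of $u(T,\cdot)$, I would use continuity of $t\mapsto u(t,\cdot)$ in $\luo$ (stated in the excerpt as a property to be proved earlier) together with a monotonicity/contraction argument: writing $u(t)=u_1(t)+u_2(t)$ where $u_1$ solves the problem with datum $u_0$ and $\mu\equiv0$ and $u_2$ solves the problem with datum $0$ and the given $\mu$, one has $u_1(t)\to0$ in $\luo$ by the standard $L^1$-decay for the homogeneous equation, and $u_2(t)$ is monotone in $t$ when $\mu$ has a sign; for general $\mu=\mu^+-\mu^-$ split further and treat each sign separately, so that $u_2(t)$ converges in $\luo$ by monotone convergence, and its limit must coincide with the Cesàro limit $v$.

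The main obstacle, as usual in these measure-data asymptotics, is the low regularity: $u$ only lives in $\luq$, so one cannot test equations with $u$ itself, and the convergences have to be routed entirely through the bounded dual test functions $w$ and $z$. Concretely the delicate points are (i) proving the lemma that $\zeta(s,\cdot)\to z$ in $\luo$ and that this convergence survives integration against the singular measure $\mu$ — this needs a uniform bound on $\zeta(s,\cdot)$ in $\cov$ (or at least in a space where $\mu$ acts continuously), which follows from parabolic regularity since $g$ is bounded and the data are time-independent, plus an energy/comparison estimate showing $\zeta(s)-z\to0$; and (ii) the passage from Cesàro convergence to pointwise-in-$T$ convergence, where the sign decomposition of $\mu$ and the $\luo$-continuity of $u$ do the work. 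Everything else — the change of variables $s=T-t$, the use of Fubini on $\intqT w\,d\mu$, and the two elliptic duality identities — is bookkeeping.
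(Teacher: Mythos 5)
Your strategy is sound and, in its essentials, correct, but it is a genuinely different route from the one taken in the paper. The paper never forms Ces\`aro averages: it first proves (Proposition \ref{vsol}) that the elliptic duality solution $v$ is a stationary duality solution of \rife{plin3}, then sets up a comparison principle for duality sub/supersolutions, squeezes $0\leq u_0\leq \lambda v$ (using Harnack to guarantee $v>0$ so that $\lambda v$ exhausts all nonnegative data), identifies the monotone limit by a discrete-time duality identity on the unit slices $(n,n+1)$ with the retrograde solutions $w^n(t,x)=w(n+t,x)$ increasing to the elliptic dual solution, handles general nonnegative $u_0\in\luo$ by truncating $u_{0,\lambda}=\min(u_0,\lambda v)$ and invoking the entropy estimate $\into\Theta_k(u-u_\lambda)(t)\leq\into\Theta_k(u_0-u_{0,\lambda})$ to get a $t$-uniform $L^1$ bound, and finally treats signed data by comparison between the solutions issued from $\max(u_0,v)$ and $\min(u_0,v)$. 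Your Ces\`aro identity $\frac1T\intq u\,g\to\into v\,g$ identifies the limit in one stroke and replaces both the $\lambda v$ squeezing and the Harnack step; your decomposition $u=u_1+u_2$ replaces the paper's truncation-plus-entropy step by the $L^1$-decay of the homogeneous semigroup (which is legitimate: it follows from the $L^1$-contraction of \cite{p} plus exponential $L^2$-decay of variational solutions, essentially the same ingredients as estimate \rife{unif}). Two points deserve more care than your sketch gives them. First, your ``key lemma'' ($\into\zeta(s)\,d\mu\to\into z\,d\mu$ for a possibly singular $\mu$) is exactly the delicate point the paper resolves with the monotone family $w^n$ and the narrow convergence of the measures $\lambda_k$; for $g\geq 0$ you should argue that $\zeta(s,\cdot)$ is nondecreasing in $s$, uniformly bounded, locally equi-H\"older, hence converges everywhere (not just a.e.) to the continuous representative of $z$, which is what lets you integrate against $\mu$. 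Second, in the signed case the Ces\`aro limit only pins down $\tilde v^{+}-\tilde v^{-}$; to identify each monotone limit you must run the averaging argument separately for $\mu^{+}$ and $\mu^{-}$ (each with zero initial datum), which works but should be said explicitly. With those two points filled in, your proof is complete and arguably more economical than the paper's.
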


\setcounter{equation}{0}
\section{Existence and uniqueness of the duality solution}
Let us prove Theorem \ref{exiuni}:
\begin{proof}
Let us first prove the result in the case $\mu\in\luq$ and $u_0$ smooth; let us fix $r,q\in\re$ such that 
\[
r, \,q>1,\ \ \ \ \ \frac{N}{q}+\frac{2}{r}<2\,,
\]
and let us consider $g\in \parl{r}{q}\cap\liq$. Let $w$ be the solution of problem \rife{retroc1}; standard parabolic regularity results (see again \cite{lsu}) imply that $w$ is continuous on $Q$ and  
\[
\|w\|_{\liq}\leq C \|g\|_{\parl{r}{q}};
\]
therefore,  the linear functional 
\[
\Lambda:\parl{r}{q}\mapsto\re,
\]
defined by
\[
\Lambda(g)=\intq w\ d\mu + \into u_0w(0)\,,
\]
is well-defined and continuous, since
\[
\dys|\Lambda (g)|\leq (\|\mu\|_{\mathcal{M}(Q)}+\|u_0\|_{\lio})\|w\|_{\liq} \leq C \|g\|_{\parl{r}{q}}.
\]
So, by \emph{Riesz's representation theorem} there exists a unique $u\in\parl{r'}{q'}$ such that
\[
\Lambda(g)=\intq u\,g\ dxdt,
\]
for any $g\in\parl{r}{q}$.
So we have that, if $\mu\in\luq$ and $u_0$ is smooth, then there exists a (unique by construction) duality solution of problem \rife{plin3}.

 A standard approximation argument shows that a unique solution  also exists for problem \rife{plin3} if $\mu\in \mathcal{M}(Q)$ and $u_0\in \luo$. In fact, via a standard convolution argument, we can approximate $u_0$ in $\luo$ with smooth functions $u_0^\vare$, and $\mu$ with smooth functions $\mu^\vare$ in the narrow topology of measures, that is 
 $$
\lim_{\vare\to 0}\intq\varphi\ d\mu^\vare = \intq\varphi\ d\mu, \ \ \ \forall \ \varphi\in C(\overline{Q}),
$$
and $\|\mu^\vare\|_{\luq}\leq C$. Hence, reasoning as in the proof of  Theorem $1.2$ in  \cite{bdgo}, one can show that there exists a function $u\in\luq$ such that $u^\vare$ converges to $u$ in $\luq$ and so we can pass to the limit in the duality formulation of $u^\vare$ to obtain the result.
\end{proof}
\setcounter{equation}{0}

\section{Asymptotic behavior}

In this section we will prove Theorem \ref{asi}.
From now on we will denote by $T_k (s)$ the function $\text{max}(-k,\text{min}(k,s))$ and $\Theta_k (s)$ will indicate its primitive function, that is:
\[
\Theta_{k}(s)=\int_0^s T_k (\sigma)\ d\sigma .
\]
Let us prove the following preliminary result:
\begin{proposition}\label{vsol} Let $\mu\in \mathcal{M}(Q)$ be independent on time
and let $v$ be the duality solution of the elliptic problem 
 \begin{equation}\label{dualasiee}
\begin{cases}
-\div(M(x)\nabla v)=\mu& \text{in}\ \Omega,\\
v=0,&  \text{on}\ \partial \Omega.
\end{cases}
\end{equation}
Then $v$ is the unique solution of the parabolic problem
   \begin{equation}\label{dualasiep}
\begin{cases}
w_t -\div(M(x)\nabla w)=\mu& \text{in}\ (0,T)\times\Omega,\\
w(0)=v(x),&  \text{in}\  \Omega,\\
w(t,x)=0 & \text{on}\ (0,T)\times\partial\Omega,
\end{cases}
\end{equation}
in the duality sense introduced in Definition  \ref{dualdef}, for any fixed $T>0$.
\end{proposition}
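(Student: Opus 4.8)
The plan is to verify directly that the stationary function $w(t,x)\equiv v(x)$ satisfies the duality formulation \rife{dualsense} for problem \rife{dualasiep}, and then invoke Theorem \ref{exiuni} (uniqueness of the duality solution) to conclude it is \emph{the} solution. Thus the task reduces to checking a single identity. Fix $T>0$ and $g\in\liq$, and let $w$ be the solution of the retrograde problem \rife{retroc1} associated to $g$ (with time-independent matrix $M$, so $M^{\ast}(t,x)=M^{\ast}(x)$). Plugging $u(t,x)=v(x)$ into \rife{dualsense}, and noting the initial datum is now $v$, we must show
\[
-\into v\, w(0)\ dx+\intq v\, g\ dxdt=\intq w\ d\mu .
\]

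First I would rewrite the middle term using Fubini: since $v$ does not depend on $t$,
\[
\intq v\, g\ dxdt=\into v\Big(\int_0^T g(t,x)\ dt\Big)dx .
\]
The key observation is that, because $M^{\ast}$ is time-independent, the function $z(x):=\int_0^T w(t,x)\ dt$ is exactly the variational solution of the dual \emph{elliptic} problem \rife{dualell} with right-hand side $\tilde g(x):=\int_0^T g(t,x)\ dt$. Indeed, integrating the equation in \rife{retroc1} over $t\in(0,T)$ gives $-\bigl(w(T,x)-w(0,x)\bigr)-\div\bigl(M^{\ast}(x)\nabla z(x)\bigr)=\tilde g(x)$, and since $w(T,x)=0$ this reads $w(0,x)-\div(M^{\ast}(x)\nabla z)=\tilde g$. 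So $w(0,x)$ is not quite negligible; let me instead introduce $\zeta(x)$, the variational solution of $-\div(M^{\ast}(x)\nabla\zeta)=\tilde g$ in $\Omega$, $\zeta=0$ on $\partial\Omega$, and compare. Using the \emph{elliptic} duality identity \rife{elldualsense} for $v$ with test function $\tilde g$ — legitimate since $\tilde g\in\lio$ by the bound $\|w\|_{\liq}\le C\|g\|$ — we get $\into v\,\tilde g\,dx=\into \zeta\ d\mu$. Hence the problem becomes showing $\intq w\ d\mu=\into\zeta\ d\mu+\into v\,w(0)\,dx$. From the integrated equation, $\zeta=z-\theta$ where $\theta$ solves $-\div(M^{\ast}\nabla\theta)=-w(0,\cdot)$; equivalently $z=\zeta+\theta$ and $\intq w\,d\mu=\into z\ d\mu=\into\zeta\,d\mu+\into\theta\ d\mu$. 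So it remains to identify $\into\theta\ d\mu$ with $\into v\,w(0)\,dx$, which is precisely the elliptic duality identity \rife{elldualsense} applied to $v$ with the admissible test datum $-w(0,\cdot)\in\lio$: it yields $\into v\cdot(-w(0))\,dx=\into\theta\ d\mu$, i.e. $\into\theta\ d\mu=-\into v\,w(0)\,dx$. Combining, $\intq w\,d\mu=\into\zeta\,d\mu-\into v\,w(0)\,dx$, which rearranges exactly to the desired identity once substituted into \rife{dualsense}.

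To make this rigorous one should first argue for smooth data ($\mu$ replaced by $\mu^{\vare}\in\luq$, $u_0$ smooth, as in the proof of Theorem \ref{exiuni}), where all the integrations by parts in $t$ and the interchange of $\int_0^T$ with $\div$ are justified by classical parabolic regularity (\cite{lsu}), and where the identification of $z=\int_0^T w\,dt$ and the solvability of the auxiliary elliptic problems is standard. Then pass to the limit: $v^{\vare}\to v$ in $\luo$ by stability of elliptic duality solutions, $u^{\vare}(=v^{\vare})\to v$ in $\luq$, $\mu^{\vare}\to\mu$ narrowly while $w$ stays fixed and continuous on $\overline Q$ (resp. $\zeta, z$ continuous on $\overline\Omega$), so every term in \rife{dualsense} passes to the limit. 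Finally, uniqueness from Theorem \ref{exiuni} gives that $v$ is \emph{the} duality solution of \rife{dualasiep}.

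The main obstacle is the bookkeeping around the nonzero trace $w(0,\cdot)$ of the retrograde solution: one must correctly split $z=\int_0^T w\,dt$ into the genuine dual-elliptic solution $\zeta$ plus the correction $\theta$ coming from $w(0,\cdot)$, and then recognize that the correction term is absorbed exactly by the $-\into v\,w(0)\,dx$ appearing in the parabolic duality formulation. Everything else — Fubini, integrating the retrograde PDE in time, and the two invocations of the elliptic duality identity \rife{elldualsense} — is routine once the time-independence of $M$ is exploited.
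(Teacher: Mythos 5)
Your argument is correct, but it proceeds along a genuinely different route from the paper's. The paper proves the identity by testing the retrograde problem \rife{retroc1} with the truncations $T_{k}(v)$ and then invoking the renormalized--solution structure of the elliptic duality solution (Theorems $2.33$ and $10.1$ of \cite{dmop}) to rewrite $\intq M^{\ast}\nabla w\cdot\nabla T_k(v)\,dxdt$ as $\int_0^T\into w\,d\lambda_k\,dt$ with $\lambda_k\to\mu$ narrowly, finally letting $k\to\infty$. You instead exploit linearity and the time--independence of $M$ to integrate the retrograde equation in $t$: you identify $z=\int_0^T w\,dt$ as the variational solution of $-\div(M^{\ast}\nabla z)=\tilde g-w(0,\cdot)$, split it as $\zeta+\theta$, and apply Stampacchia's elliptic duality identity \rife{elldualsense} twice (with the admissible bounded data $\tilde g$ and $-w(0,\cdot)$), the $\theta$--term being exactly absorbed by the initial--datum term $-\into v\,w(0)\,dx$. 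This keeps the whole proof inside the duality framework of \cite{s} and avoids the renormalized machinery altogether, which is arguably cleaner for the linear problem at hand; the paper's truncation argument is the one that survives in the nonlinear setting. Two small points: the step $\intq w\,d\mu=\into\bigl(\int_0^T w\,dt\bigr)d\mu_\Omega$ deserves an explicit word (it is Fubini for the product measure $\mu_\Omega\otimes dt$, legitimate because $w$ is bounded and continuous, and $\zeta,\theta$ are continuous by De Giorgi--Nash so the $\mu_\Omega$--integrals are well defined); and the final approximation layer you add ($\mu^{\vare}\to\mu$, $v^{\vare}\to v$) is essentially superfluous, since your identities only use the duality definitions of $v$ and of the retrograde solution, both of which are already meaningful for general $\mu$.
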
 
\begin{proof}
We have to check that $v$ is a solution of problem \rife{dualasiep}; to do that let us choose  $\tkv$ as test function in \rife{retroc1}. We obtain
$$
\begin{array}{l}
\quad\dys-\int_0^T\langle w_t,\tkv\rangle \ dt+\intq M^\ast (x) \nabla w\cdot\nabla \tkv\ dxdt
\dys=\intq \tkv\ g\ dxdt.
\end{array}
$$
Now, integrating by parts we have
\[
-\int_0^T\langle w_t,\tkv\rangle \ dt=\into w(0) v(x) +\omega(k),
\]
where $\omega(k)$ denotes a nonnegative quantity which vanishes as $k$ diverges, while 
\[
\intq \tkv\ g\ dxdt= \intq v\,g \ dxdt +\omega(k).
\]
Finally, using Theorem $2.33$ and Theorem $10.1$ of \cite{dmop}, we have
\[
\intq M^\ast(x) \nabla w\cdot\nabla \tkv\ dxdt=\intq M(x)\nabla\tkv\cdot\nabla w\ dxdt=\int_0^T \into w \ d\lambda_k (x)\, dt,
\]
where the $\lambda_k $ are measures in $\mathcal{M}(\Omega)$ which converge to $\mu$ in the narrow topology of measures;  thus, recalling that $w$ is bounded continuous, and using the dominated convergence theorem, we have  
\[
\intq M^\ast (x) \nabla w\cdot\nabla \tkv\ dxdt=\intq w\ d\mu +\omega(k).
\]
  Gathering together all these facts, we have that $v$ is a duality solution of \rife{plin3} having itself as initial datum. 
\end{proof}
Proposition \ref{vsol} allows us to deduce that the duality solution of problem \rife{plin3} $u$ belongs to $C(0,T;\luo)$ for any fixed $T>0$; indeed, $z=u- v$ uniquely solves  problem
\begin{equation}\label{dualasiz}
\begin{cases}
z_t -\div(M(x)\nabla z)=0& \text{in}\ (0,T)\times\Omega,\\
z(0)=u_0 - v & \text{in}\ \Omega,\\
z=0 & \text{on}\ (0,T)\times\partial\Omega,
\end{cases}
\end{equation}
in the duality sense, and so $z\in C(0,T;\luo)$. This is due to a result of \cite{po}, since $z$ turns out to be an entropy solution in the sense of the definition given in \cite{p}.

So, we have  that $u$ satisfies 
\begin{equation}\label{dualdual}
\intq u\, g\ dxdt=\intq w\ d\mu+\into u_0 \, w(0)\ dx,
\end{equation}
for any $g\in\liq$, where $w$ is the unique solution of the retrograde problem
\begin{equation}\label{retasi}
\begin{cases}
    -w_{t}- \mathrm{div} (M^\ast (x)\nabla w)
 =g & \text{in}\ (0,T)\times\Omega,\\
    w(T,x)=0 & \text{in}\ \Omega,\\
 w(t,x)=0 &\text{on}\ (0,T)\times\partial\Omega.
  \end{cases}
\end{equation}

Therefore, as we said before, for fixed $\mu$ and $g\in\liq$  one can uniquely determine $u$ and $w$,   solution of the above problems, defined for any time $T>0$.

Moreover, let us give the following definition:
\begin{definition}
A function $u\in \luq$ is a \emph{duality supersolution} of problem \rife{plin3} if 
\[
\intq u \, g\ dxdt \geq \intq w\ d\mu +\into u_0 w(0)\ dx,
\]
for any bounded $g\geq 0$, and $w$ solution of \rife{retasi}, while $u$ is a \emph{duality subsolution} if $-u$ is a duality supersolution.
\end{definition}
\begin{lemma}\label{asilemma}
Let $\ou$ and $\uu$ be respectively a duality supersolution and a duality subsolution for problem \rife{plin3}. Then
 $\uu\leq\ou$.
 \end{lemma}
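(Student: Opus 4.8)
The plan is to exploit the linearity of the problem together with the sign-preservation built into the duality formulation. Set $w_0 = \uu - \ou$; by definition, $\ou$ being a duality supersolution and $\uu$ a duality subsolution (so that $-\uu$ is a duality supersolution) means that for every bounded $g\geq 0$ and $w$ the corresponding solution of the retrograde problem \rife{retasi},
\[
\intq \ou\, g\ dxdt \geq \intq w\ d\mu + \into u_0 w(0)\ dx, \qquad
\intq (-\uu)\, g\ dxdt \geq \intq w\ d\mu + \into(-u_0) w(0)\ dx.
\]
Here I am implicitly taking the same initial datum $u_0$ for both the sub- and supersolution; if the definition is meant to allow arbitrary comparison, one instead compares to a genuine duality solution $u$ of \rife{plin3} and uses that $\ou - u \geq 0$ and $u - \uu \geq 0$ separately, then adds. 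Either way, adding the two inequalities makes the $\intq w\,d\mu$ and the $\into u_0 w(0)$ terms cancel, leaving
\[
\intq (\ou - \uu)\, g\ dxdt \geq 0
\]
for every bounded $g\geq 0$.

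The second step is to upgrade this family of integral inequalities to the pointwise bound $\uu \leq \ou$ a.e. in $Q$. Since $\ou - \uu \in \luq$ and the inequality $\intq(\ou-\uu)g \geq 0$ holds for \emph{all} nonnegative bounded $g$, the standard duality/density argument applies: choosing $g = \chi_E$ for an arbitrary measurable $E\subseteq Q$ (which is bounded and nonnegative) gives $\int_E (\ou - \uu)\ dxdt \geq 0$ for every such $E$, and this forces $\ou - \uu \geq 0$ a.e., i.e. $\uu \leq \ou$ a.e. in $Q$. One subtlety worth a line: the test functions $g$ in \rife{retasi} should lie in $\liq$ for the retrograde problem to have the regularity invoked in Definition \ref{dualdef}, and $\chi_E \in \liq$, so this is legitimate; if one prefers to stay within, say, $\parl{r}{q}\cap\liq$ as in the proof of Theorem \ref{exiuni}, one first takes $g$ smooth and nonnegative, passes to the limit, and then approximates $\chi_E$.

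The only real point requiring care — and the step I would flag as the main obstacle — is the bookkeeping of the initial data in the definition of duality sub/supersolution: the cancellation of the $\into u_0 w(0)\,dx$ term works cleanly only if sub- and supersolution are compared relative to the \emph{same} $u_0$ (or, more robustly, relative to an actual duality solution, using Theorem \ref{exiuni} to know one exists and the linearity of \rife{plin3} to split the difference into two one-sided comparisons). Once that is pinned down, everything else is the soft linear-functional-analytic argument above; no parabolic regularity beyond what is already quoted for $w$ is needed, and the conclusion $\uu \leq \ou$ follows.
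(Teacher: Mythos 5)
Your proof is correct and follows essentially the same route as the paper's one-line argument: subtract (equivalently, add) the two duality inequalities so that the $\intq w\,d\mu$ and $\into u_0 w(0)\,dx$ terms cancel, leaving $\intq(\ou-\uu)g\,dxdt\ge 0$ for all bounded $g\ge 0$, whence $\uu\le\ou$ a.e. The extra care you take with the interpretation of the subsolution definition and with the final step ($g=\chi_E$) is sound but just fills in details the paper leaves implicit.
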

 \begin{proof}
Simply subtract the formulations for $\underline{u}$ and $\overline{u}$ to obtain
 \[
 \intq (\uu-\ou)g\ dxdt\leq 0, 
 \]
 for any $g\geq 0$, and so $\uu\leq\ou$. 
 \end{proof}
 \begin{remark}\label{perognit}
Observe that, if the functions in Lemma \ref{asilemma} are continuous with values in $\luo$, then we actually have that $\uu(t,x)\leq\ou(t,x)$ for every fixed $t$, a.e on $\Omega$. 
 \end{remark}
 \begin{proof}[Proof of Theorem \ref{asi}] % Migliora e rileggi!
   We split the proof in few steps. 
   
   {\it Step $1$.}  Let us  first suppose $u_0 =0$ and $\mu\geq 0$. 
   If  we consider a parameter $s>0$ we have that both $u(t,x)$ and $u_{s}(t,x)\equiv u(t+s,x)$ are duality 
 solutions of problem \rife{plin3} with, respectively, $0$ and $u(s,x)\geq 0$ as initial datum;
 so,  from Lemma \ref{asilemma} we deduce that $u(t+s,x)\geq u(t,x)$ for $t,s>0$. Therefore $u$ is a monotone nondecreasing
 function in $t$ and so it converges to a function  $\tilde{v}(x)$ almost everywhere and  in $\luo$ since, thanks to Proposition \ref{vsol} and Lemma \ref{asilemma}, $u(t,x)\leq v(x)$. \\

Now, recalling that $u$ is obtained as limit of regular solutions with smooth data $\mu_\vare$, we can define $\utn (t,x)$ as the solution of
\begin{equation}\label{dualasitn}
\begin{cases}
(\utn)_t -\div(M(x)\nabla \utn)=\mu_\vare & \text{in}\ (0,1)\times\Omega,\\
\utn(0,x)=u_\vare (n,x) & \text{in}\ \Omega\\
\utn= 0 & \text{on}\ (0,1)\times\partial\Omega.
\end{cases}
\end{equation}

On the other hand, if $g\geq 0$, we define $w^n (t,x)$ as 
\begin{equation}\label{retasin}
\begin{cases}
    -w^{n}_{t}- \mathrm{div} (M^\ast (x)\nabla w^n)
 =g & \text{in}\ (0,1)\times\Omega,\\
    w^n (1,x)=w(n+1,x) & \text{in}\ \Omega,\\
 w^n =0 &\text{on}\ (0,1)\times\partial\Omega.
  \end{cases}
\end{equation}
Recall that, through the change of variable $s=T-t$,  $w$ solves a similar linear parabolic problem, so that if $g\geq 0$, by classical comparison results one has that $w(t,x)$  is decreasing in time. 
  Moreover, by comparison principle, we have that $w^n $ is increasing with respect to $n$ and,  again by comparison
 Lemma \ref{asilemma}, we have that, for fixed $t\in (0,1)$
\[
w^{n}(1,x)\leq w^{n}(t,x)=w(n+t,x)\leq w(n,x)=w^{n-1}(1,x),
\]
and so
 its limit $\tilde{w}$ does not depend on time and is the solution of
  \begin{equation}\label{retasie}
\begin{cases}
    - \mathrm{div} (M^\ast (x)\nabla \tilde{w})
 =g & \text{in}\ \Omega,\\
 \tilde{w}(x)=0 &\text{on}\ \partial\Omega .
  \end{cases}
\end{equation}
An analogous argument shows that also the limit of $u^n$ (which exists thanks to standard compactness arguments, see for instance \cite{bdgo} again) does not depend on time.
   Thus, using $\utn $ in  \rife{retasin} and $w^n$ in \rife{dualasitn}, integrating by parts,  subtracting, and passing to the limit over $\vare$,  we obtain
  \[
 \int_0^1 \into u^n \, g -\int_0^1 \into w^n\ d\mu +\into u^n (0) w^n (0)\ dx -\into u^n (1) w^n (1)\ dx=0. 
  \]
  Hence, we can pass to the limit on $n$ using monotone convergence theorem obtaining
   \begin{equation}\label{dacita}
  \into \tilde{v} \, g - \into \tilde{w}\ d\mu \ dx=0, 
  \end{equation}
  and so $v=\tilde{v}$.

  If $g$ has no sign we can reason separately with $g^+$ and $g^-$ 
  obtaining \rife{dacita} and then using the linearity of \rife{dualdual} to conclude.

 If $v$ is the duality  solution of problem \rife{elin3}, we proved in Proposition \ref{vsol} that $v$ is also the duality solution
of the initial boundary value problem \rife{plin3}
with $v$ itself as initial datum. Therefore, by comparison Lemma \ref{asilemma}, if $0\leq u_{0}\leq v$, we have that
the solution $u(t,x)$ of \rife{plin3} converges to $v$ in $\elle{1}$ as $t$ tends to infinity; in fact, we proved it for the duality solution with homogeneous initial datum, while $v$ is a nonnegative duality solution with itself as initial datum.

  {\it Step $2$.} Now, let us take $u_\lambda (t,x)$ the solution of problem \rife{plin3} with  $u_{0}= \lambda v$ as initial datum
  for some $\lambda>1$ and again $\mu\geq 0$.
Hence, since $\lambda v$ does not depend on  time, we have that it is a duality supersolution of the parabolic problem \rife{plin3},
 and, observing that $v$ is a subsolution of the same problem,
we can apply again the comparison lemma  finding that $v(x)\leq u_\lambda (t,x)\leq
\lambda v(x) $ a.e. in $\Omega$, for all positive $t$.

Moreover, thanks to the fact that the datum $\mu$ does not depend on time, we can apply the comparison result also between
$u_\lambda (t+s,x)$ solution with $u_{0}=u_\lambda (s,x)$, with $s$ a positive
parameter, and $u_\lambda (t,x)$, the solution with $u_{0}=\lambda v$ as initial datum; so we obtain $u_\lambda (t+s,x)\leq u_\lambda (t,x)$ for
all $t,s>0$, a.e. in $\Omega$. So, by virtue of this monotonicity
result we have that there exists a function $\overline{v}\geq v$ such
that $u_\lambda (t,x)$ converges to $\overline{v}$ a.e. in $\Omega$ as $t$
tends to infinity. Clearly  $\overline{v}$ does not depend on
$t$ and we can develop the same argument used before 
 to prove that we can pass to the limit in the approximating duality 
formulation, and so, by uniqueness, we can obtain that
$\overline{v}=v$. So, we have proved that the result holds for
the solution starting from $u_{0}= \lambda v $ as initial
datum, with $\lambda >1$ and $\mu\geq 0$. Since we proved before that the result holds true also for the solution starting from $u_{0}=0$, then,
again applying a comparison argument, we can conclude in the same way that the convergence to $v$ holds true for solutions
 starting from $u_{0}$ such that
 $0\leq u_{0}\leq \lambda v$ as initial datum, for fixed $\lambda>1$.

 {\it Step $3$.} Now, let $u_0  \in\luo$ a nonnegative function and $\mu\geq 0$,  and  recall that, thanks to suitable Harnack inequality (see \cite{t}), if $\mu\neq 0$, then $v>0$ (which implies $\lambda v$ tends to $+\infty$ on $\Omega$ as $\lambda$ diverges). Without loss of generality we can suppose $\mu\neq 0$ (the case $\mu\equiv 0$ is the easier one and it can be proved as in \cite{pe});
let us define the monotone nondecreasing
  (with respect to $\lambda$) family of functions
\[
u_{0,\lambda}=\min(u_{0},\lambda v).   \]

  As we have shown above, for every fixed $\lambda>1$,  $u_{\lambda}(t,x)$, the duality solution of problem
  \rife{plin3} with $u_{0,\lambda}$ as initial datum, converges to $v$ a.e. in $\Omega$, as $t$ tends to infinity. Moreover, using again standard compactness arguments,we also have
   that $T_{k}(u_{\lambda}(t,x))$ converges to $T_{k}(v)$ weakly
 in $H^1_0 (\Omega)$ as $t$ diverges, for every fixed $k>0$.

So, thanks to \emph{Lebesgue theorem}, 
 we  can easily check that $u_{0,\lambda}$ converges to $u_{0}$ in $L^{1}(\Omega)$ as
 $\lambda$ tends to infinity.
  Therefore, using a stability result for renormalized solutions of the linear problem \rife{plin3}
 (see \cite{pe1}) we obtain that  $T_{k}(u_{\lambda}(t,x))$ converges to $T_{k}(u(t,x))$
 strongly in $L^2 (0,T;H^1_0 (\Omega))$ as $\lambda$  tends to infinity.

On the other hand, since $z_\lambda=u-u_\lambda$ solves the problem
\begin{equation}
\begin{cases}
(z_\lambda)_t -\div(M(x)\nabla z_\lambda)=0& \text{in}\ (0,T)\times\Omega,\\
z_\lambda (0)=u_0-u_{0,\lambda} & \text{in}\ \Omega,\\
z_\lambda =0 & \text{on}\ (0,T)\times\partial\Omega,
\end{cases}
\end{equation}
in the duality sense, then $z_\lambda$  turns out to be an entropy solution of the same problem and so we have (see \cite{p})
\[
\into \Theta_{k}(u-u_{\lambda})(t)\ dx\leq
\into\Theta_{k}(u_{0}-u_{0,\lambda})\ dx,
\]
for every $k,\  t>0$. Dividing the above inequality by
$k$, and passing to the limit as $k$ tends to $0$ we obtain
\begin{equation}\label{unif}
\|u(t,x)-u_{\lambda}(t,x)\|_{L^{1}(\Omega)}\leq
\|u_{0}(x)-u_{0,\lambda}(x)\|_{L^{1}(\Omega)},
\end{equation}
for every $t>0$. Hence, we have
\[
\|u(t,x)-v(x)\|_{L^{1}(\Omega)}\leq
\|u(t,x)-u_{\lambda}(t,x)\|_{L^{1}(\Omega)}+\|u_{\lambda}(t,x)-v(x)\|_{L^{1}(\Omega)};
\]
then, thanks to the fact that the estimate in (\ref{unif}) is
uniform in $t$, for every fixed $\epsilon$, we can choose
$\bar{\lambda}$ large enough such that
\[
\|u(t,x)-u_{\bar{\lambda}}(t,x)\|_{L^{1}(\Omega)}\leq
\frac{\epsilon}{2},
\]
for every $t>0$; on the other hand, thanks to the result proved
above, there exists $\bar{t}$ such that
\[
\|u_{\bar{\lambda}}(t,x)-v(x)\|_{L^{1}(\Omega)}\leq \frac{\epsilon}{2},
\]
for every $t>\bar{t}$, and this concludes the proof of the result in the case of nonnegative data $\mu$ and $u_0\in\luo$.

{\it Step $4$.} Let $\mu\in \mathcal{M}(Q)$ be independent on $t$ and $u_0 \in\luo$ with no sign assumptions. We consider again  the function $z(t,x)= u(t,x)-v(x)$; thanks to Proposition \ref{vsol} it turns out to solve problem
\begin{equation}\label{dualasize}
\begin{cases}
z_t -\div(M(x)\nabla z)=0& \text{in}\ (0,T)\times\Omega,\\
z(0)=u_0 - v & \text{in}\ \Omega,\\
z=0 & \text{on}\ (0,T)\times\partial\Omega,
\end{cases}
\end{equation}
and so, if either $u_0 \leq v$ or $u_0\geq v$ then the result is true since $z(t,x)$ tends to zero in $\luo$ as $t$ diverges thanks to what we proved above. Now,  if $u^\oplus$ solves
$$
\begin{cases}
u^{\oplus}_{t} -\div(M(x)\nabla u^\oplus)=\mu & \text{in}\ (0,T)\times\Omega,\\
u^\oplus (0)=\max{(u_0 , v)} & \text{in}\ \Omega,\\
u^\oplus=0 & \text{on}\ (0,T)\times\partial\Omega,
\end{cases}
$$
and $u^\ominus$ solves
$$
\begin{cases}
u^{\ominus}_{t} -\div(M(x)\nabla u^\ominus)=\mu & \text{in}\ (0,T)\times\Omega,\\
u^\ominus (0)=\min{(u_0 , v)} & \text{in}\ \Omega,\\
u^\ominus=0 & \text{on}\ (0,T)\times\partial\Omega,
\end{cases}
$$
 then by comparison we have $u^\ominus (t,x)\leq u(t,x)\leq u^\oplus (t,x)$ for any $t$, a. e. in $\Omega$, and this concludes the proof since the result holds true for both $u^\oplus$ and $u^\ominus$.
 \end{proof}

\end{document}